\documentclass[letterpaper, 10 pt, conference]{ieeeconf}

\IEEEoverridecommandlockouts      
\overrideIEEEmargins

\usepackage{epsfig}
\usepackage{epstopdf}
\usepackage{cite}
\usepackage{graphicx}
\graphicspath{{Figures/}}
\usepackage{url}
\usepackage{stfloats}
\usepackage{amsmath}
\usepackage{amsfonts}   
\usepackage{amsopn}     
\interdisplaylinepenalty=2500
\usepackage{psfrag}
\usepackage[tight]{subfigure}
\usepackage{mdwtab}     
\usepackage{enumerate}
\usepackage{color}

\newtheorem{theorem}{\it Theorem}
\newtheorem{prop}{\it Proposition}

\newtheorem{lemma}{\it Lemma}

\newtheorem{rem}{\it Remark}

\hyphenation{op-tical net-works semi-conduc-tor}

%
%
%
%

\begin{document}

\title{\LARGE \bf Generation of and Switching among Limit-Cycle Bipedal Walking Gaits}

\author{Sushant~Veer, Mohamad~Shafiee~Motahar, and Ioannis~Poulakakis
\thanks{S. Veer, M. S. Motahar and I. Poulakakis are with the Department of Mechanical Engineering, University of Delaware, Newark, DE 19716, USA;  e-mail: {\tt\small \{veer, motahar, poulakas\}@udel.edu.}}
\thanks{This work is supported in part by NSF CAREER Award IIS-1350721 and by NRI-1327614.}
}

\maketitle

\begin{abstract}
In this paper we provide a method to generate a continuum of limit cycles using a single precomputed exponentially stable limit cycle designed within the Hybrid Zero Dynamics framework. Guarantees for existence and stability of these limit cycles are provided. We derive analytical constraints that ensure boundedness of the state under arbitrary switching among a finite set of limit cycles extracted from the continuum. These limit cycles are used for changing the speeds of an underactuated planar bipedal model while satisfying all modeling constraints such as saturation torque and coefficient of friction in a provably correct manner. A strongly connected directed graph of allowable limit cycle switches is built to obtain valid limit cycle transitions for speed changes within 0.42-0.81 m/s.
\end{abstract}

\IEEEpeerreviewmaketitle

\section{Introduction}
\label{sec:intro}

A single limit-cycle gait of a bipedal walker encodes certain characteristic attributes, like average speed or toe clearance. When evolving in the neighborhood of such gait, the biped cannot deviate substantially from these nominal attributes. Thus, increasing the richness of the behaviors exhibited by a dynamically walking bipedal robot entails the generation of multiple limit cycles so that stable switching among them can be realized. However, this can be a challenging task, for the generation of each gait typically  involves numerical integration of the high-dimensional nonlinear dynamics of the system. Ensuring stable switching on the other hand requires estimating the basin of attraction (BoA) of each individual gait. This paper proposes an analytical method for generating a continuum of limit cycles for underactuated dynamic bipeds, while providing guarantees of boundedness of the state under switching among them.

Quasi-static bipedal walkers are capable of a rich variety of behaviors as documented in \cite{Harada2010}. On the other hand, dynamically walking bipeds have not enjoyed similar success, primarily due to the difficulties associated with stabilizing their motions. In the context of underactuated limit-cycle walkers, there have been various efforts toward generating stable and robust gaits; see \cite{freidovich2009passive, gregg2009reduction, westervelt2003hybrid}. Recently, control Lyapunov Function (CLF) based methods were used to enhance the capabilities of bipedal robots~\cite{ames2014rapidly}, including foot placement planning as in \cite{nguyen2015safety} for example. Limit cycles robust to rough terrain disturbances were designed in \cite{hamed2016exponentially}. A method that can be used to expand the BoA of limit cycles was presented in \cite{tedrake2010}. Speed adaptation of HZD (Hybrid Zero Dynamics) based walkers in the presence of an external force \cite{veer2015adaptation} was exploited to realize collaborative object transportation in \cite{motahar2015impedance}. It must be noted that all the above papers either focus on the search for a better limit cycle or on enhancing the properties of an existing one.

Efficient online/offline generation of limit cycles has gained considerable interest recently. A continuum of limit cycles was generated in \cite{razavi2016symmetry} by systematically exploiting the symmetry in idealized walking models. An online gait generation method using nonlinear programming solvers was presented in \cite{hereid2016online}.
To exploit the availability of these limit cycles, the ability to switch among them is required. Switching among a continuum of limit cycles was performed in \cite{da20162d, quan2016dynamic}, while stochastic and supervised learning based switching policies were presented in \cite{saglam2013switching} and \cite{da2016first}, respectively. The aforementioned papers do not provide guarantees of stability under switching. An exception to this is the authors' previous work \cite{motahar2016composing} where provably stable switching was employed to plan motions of a 3D biped in an environment cluttered by obstacles. 
Perhaps, the most challenging aspect of providing such guarantees is obtaining estimates of the BoA of the limit cycles involved. Here, we propose a method that ensures boundedness of the state while switching among multiple limit cycles, without requiring the estimation of the BoA.

This paper proposes an analytical approach for generating a continuum of exponentially stable limit cycles for HZD based bipeds, while providing guarantees for switching among them in the absence of external disturbances. Motivated by \cite{3D-robotica2012} that induces turning in a 3D bipedal robot, the proposed method generates a continuum of limit cycles by smoothly modulating the virtual holonomic constraints that determine the biped's gait. The choice of outputs ensures hybrid invariance of the zero dynamics manifold despite switching among limit cycles, thereby greatly facilitating the commute from one limit cycle to another. A graph of feasible limit cycle switches that comply with modeling constraints is constructed to realize speed planning for a bipedal robot model. The proposed method is easy to implement and retains the analytical tractability associated with the HZD \cite{westervelt2003hybrid}. This paper provides a step towards enriching the behaviors exhibited by dynamic bipedal walkers so that they can accomplish tasks that require greater locomotion flexibility than that provided by a single limit-cycle walking gait.


\section{Modeling and Control}
\label{sec:mod_con}

We study bipedal robots with a single degree of underactuation such as RABBIT \cite[Table~I]{westervelt2003hybrid}; see Fig.~\ref{fig:model}. The robot model has two legs with knees and a torso. The contact between the stance foot and the ground is modeled as unactuated pivot joint while the rest of the robot's joints---two at the knees and two at the hip---are actuated. 

\begin{figure}[t]
\begin{centering}
\includegraphics[width=0.5\columnwidth]{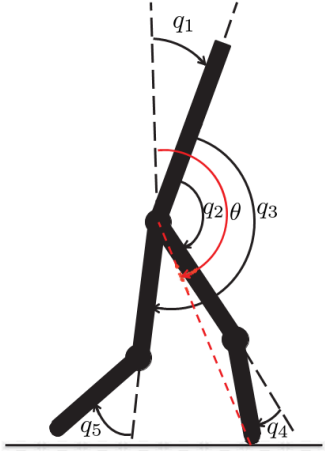} 
\par\end{centering}
\vspace{-0.1in}
\caption{Robot model with a choice of generalized coordinates.}
\vspace{-0.1in}
\label{fig:model} 
\end{figure}

\subsection{Model}
The configuration space $Q$ is a subset of the physically realizable configurations of the robot and let $q:=(q_1,q_2,q_3,q_4,q_5)$ be a set of coordinates on $Q$. The dynamics of the swing phase is
\begin{equation}\label{eq:swing-dyn-q}
D(q) \ddot{q} + C(q,\dot{q})\dot{q} + G(q) = Bu \enspace, 
\end{equation}
where $D$, $C$ are the inertia and Coriolis matrices, $G$ is the gravitational vector, and $B$ maps the actuator inputs to the generalized forces. In state-space form, \eqref{eq:swing-dyn-q} becomes 
\begin{equation}\label{eq:swing-dyn-x}
\dot{x} = f(x) + g(x) u \enspace.
\end{equation}
where $x := (q', \dot{q}')' \in TQ := \{ (q',\dot{q}')' ~|~ q\in Q,~\dot{q}\in \mathbb{R}^5 \}$. The swing phase terminates in an instantaneous double support phase when the swing leg hits the ground. The set of states for which a valid foot impact occurs is called the switching surface and is defined as
\begin{equation}\label{eq:switching-surface}
\mathcal{S} := \{ (q,\dot{q}) \in TQ ~|~ p_{\rm v}(q) = 0,~\dot{p}_{\rm v}(q,\dot{q})<0 \} \enspace,
\end{equation}
where $p_{\rm v}(q)$ represents the height of the swing foot. The impact of the foot with the ground reinitalizes the swing phase according to the impact map $\Delta:\mathcal{S} \to TQ$ which maps the states before impact $x^-$ to those after impact $x^+$,
\begin{equation}\label{eq:Delta}
x^+ = \Delta(x^-) \enspace,
\end{equation}
see \cite{westervelt2003hybrid} for more details. This gives rise to a hybrid system that has alternating swing and double support phases
\begin{eqnarray}\nonumber 
  \Sigma_{\rm o}:
  \begin{cases}
    \begin{aligned}
        \dot{x} &= f(x) + g(x) u , & x\notin\mathcal{S} \\
        x^+& = \Delta(x^-), & x\in\mathcal{S}
    \end{aligned}
  \end{cases}.
\end{eqnarray}

\subsection{Controller}
The controller used in this paper is designed within the HZD framework. The controlled joints are $q_{\rm a}:=(q_2, q_3, q_4, q_5)^{\rm T}$. The following output is associated to \eqref{eq:swing-dyn-x},
\begin{equation}\label{eq:hd-output}
y = h(q) := q_{\rm a} - h_{\rm d} \circ \theta(q) \enspace,
\end{equation}
where $\theta(q):=q_1 + q_2 + \frac{1}{2}q_4$ is shown in Fig.~\ref{fig:model}. We restrict our attention to gaits in which $\theta(q)$ increases monotonically during the step. The nominal control law $u^*(x) = -L_gL_fh^{-1}(x)L_f^2h(x)$ renders the zero dynamics surface 
\begin{equation}\label{eq:nom-Z}
\mathcal{Z} := \{ (q,\dot{q})\in TQ~|~h(q)=0,~L_f h(q,\dot{q})=0 \} \enspace,
\end{equation}
invariant under the closed-loop swing dynamics. Additionally, the design of $h_{\rm d}(\theta)$ according to~\cite[Section~V.A]{westervelt2003hybrid}, ensures the invariance of \eqref{eq:nom-Z} under impact \eqref{eq:Delta}. 

Our method of generating a continuum of limit cycles relies on suitably modifying the output function \eqref{eq:hd-output} to include an additional term $h_{\rm s}$ as follows
\begin{equation}\label{eq:beta-output}
y_\beta = h_\beta(q) = q_{\rm a} - h_{\rm d} (\theta) - h_{\rm s}(\theta,\beta)\enspace.
\end{equation}
The term $h_{\rm s}(\theta,\beta)$ is a polynomial of $\theta$ with coefficients dependent on the parameters $\beta \in \mathbb{R}^{\dim (\beta)}$, and it is designed as follows. First, we require that
\begin{equation}\label{eq:hs-conditions-1}
h_{\rm s}(\theta,0)=0,~~~~\mathrm{for}~\theta^-\leq\theta\leq\theta^+ \enspace,
\end{equation}
so that when $\beta=0$, the modified output \eqref{eq:beta-output} reduces to the original output \eqref{eq:hd-output}; i.e., $h_\beta(q) = h(q)$. Then, we define $\theta_{\rm s} = \theta^+ + 0.9 (\theta^- - \theta^+)$, where $\theta^+$ and $\theta^-$  are the values of $\theta$ at the beginning (post-impact) and the end (pre-impact) of a step, and we impose the following conditions
\begin{eqnarray}\label{eq:hs-conditions}
	\begin{cases}
		\begin{aligned}
		& h_{\rm s}(\theta^+,\beta) = 0,~\frac{\partial h_{\rm s}}{\partial \theta}(\theta^+,\beta) = 0\\
		& h_{\rm s}(\theta_s,\beta) = 0,~\frac{\partial^i h_{\rm s}}{\partial \theta^i}(\theta_s,\beta) = 0,~i = 1,2\\
		& h_{\rm s}(\theta,\beta) = 0,~~\mathrm{for}~~\theta_s \leq \theta \leq \theta^-
		\end{aligned}
	\end{cases}
\end{eqnarray}
Essentially, the polynomials $h_{\rm s}(\theta,\beta)$ vanish at the post-impact instant (when $\theta = \theta^+$) and after 90\% of the step is completed (when $\theta \in [\theta_{\rm s}, \theta^-)$). Similar output designs were proposed in \cite{3D-robotica2012} to induce turning in a 3D biped; here, we show that by merely picking different $\beta$, we can smoothly modulate the output and generate limit cycles corresponding to different walking gaits. 

The zero dynamics surface associated with $h_\beta$ is 
\begin{equation}\label{eq:prim-Z}
\mathcal{Z}_\beta := \{ (q,\dot{q})\in TQ~|~h_\beta(q)=0,~L_f h_\beta(q,\dot{q})=0 \}\enspace,
\end{equation}
and is rendered invariant under the \eqref{eq:swing-dyn-x} in closed loop with the control law $u_\beta^*(x) = -L_{g}L_{f} h_\beta(x)^{-1}L^2_f h_\beta(x) $. The resulting closed-loop system is 
\begin{eqnarray}\nonumber 
  \Sigma:
  \begin{cases}
    \begin{aligned}
        \dot{x} &= f(x) + g(x) u_\beta^*(x) , & x\notin\mathcal{S} \\
        x^+& = \Delta(x^-), & x\in\mathcal{S}
    \end{aligned}
  \end{cases}.
\end{eqnarray}

Finally, $\mathcal{Z}_\beta$ can be rendered attractive by modifying $u_\beta^*(x)$ as $u_\beta(x) = u_\beta^*(x) + L_{g}L_{f} h_\beta(x)^{-1} \nu$, where $\nu$ is an auxiliary control term. In this paper we chose a CLF based control law to generate $\nu$ using a Quadratic Program (QP) with constraints on the motor torques, the coefficient of friction, and the unilateral ground reaction force so that the physical limitations of the model are respected; see \cite[Section~4.1]{quan2016dynamic} for details about the CLF based QP.

The following lemma establishes some useful properties of $\mathcal{Z}_\beta$ and will be invoked frequently throughout the paper.
\begin{lemma}\label{lem:Z-properties}
Let $h_{\rm s}(\theta,\beta)$ satisfy \eqref{eq:hs-conditions} and $\mathcal{Z}$, $\mathcal{Z}_\beta$ be defined as in \eqref{eq:nom-Z} and \eqref{eq:prim-Z} respectively. Then, for all $\beta \in \mathbb{R}^{\dim (\beta)}$
\begin{enumerate}[i)]
\item $\mathcal{S}\cap\mathcal{Z}_\beta = \mathcal{S}\cap\mathcal{Z}$, and
\item $\mathcal{Z}_\beta$ is hybrid invariant.
\end{enumerate}
\end{lemma}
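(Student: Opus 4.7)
The plan is to handle the two claims separately, and then (ii) will combine (i) with Westervelt's construction of $h_{\mathrm d}$. The key observation running through everything is that on the switching surface the swing phase has just finished (so $\theta=\theta^-$), while at the post-impact state the swing phase has just begun (so $\theta=\theta^+$), and at both of these values of $\theta$ the correction term $h_{\mathrm s}(\cdot,\beta)$ together with its first partial derivative in $\theta$ vanishes, by construction \eqref{eq:hs-conditions}. Because $h_\beta$ and $h$ differ only by $h_{\mathrm s}(\theta,\beta)$, this forces $h_\beta=h$ and $L_f h_\beta = L_f h$ at precisely the states that matter.

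For part (i), I would first note that for $(q,\dot q)\in\mathcal{S}$ we have $p_{\mathrm v}(q)=0$, which corresponds to the end of step, i.e.\ $\theta(q)=\theta^-$. The third line of \eqref{eq:hs-conditions} gives $h_{\mathrm s}(\cdot,\beta)\equiv 0$ on $[\theta_{\mathrm s},\theta^-]$, so in particular $h_{\mathrm s}(\theta^-,\beta)=0$ and $\partial h_{\mathrm s}/\partial\theta(\theta^-,\beta)=0$. A one-line computation of $L_fh_\beta=\dot q_{\mathrm a}-h_{\mathrm d}'(\theta)\dot\theta-(\partial h_{\mathrm s}/\partial\theta)(\theta,\beta)\dot\theta$ then shows $h_\beta(q)=h(q)$ and $L_fh_\beta(q,\dot q)=L_fh(q,\dot q)$ on $\mathcal{S}$, from which $\mathcal{S}\cap\mathcal{Z}_\beta=\mathcal{S}\cap\mathcal{Z}$ follows immediately.

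For part (ii), I unpack hybrid invariance into continuous and discrete pieces. The continuous part is standard HZD bookkeeping: by construction, $u_\beta^*(x)=-L_gL_fh_\beta(x)^{-1}L_f^2h_\beta(x)$ is the decoupling controller that zeros $\ddot y_\beta$, so $\mathcal{Z}_\beta$ is invariant under $f+gu_\beta^*$. For the discrete part, I need $\Delta(\mathcal{S}\cap\mathcal{Z}_\beta)\subseteq\mathcal{Z}_\beta$. Using (i), this is the same as showing $\Delta(\mathcal{S}\cap\mathcal{Z})\subseteq\mathcal{Z}_\beta$. The design of $h_{\mathrm d}$ per \cite{westervelt2003hybrid} already guarantees $\Delta(\mathcal{S}\cap\mathcal{Z})\subseteq\mathcal{Z}$, so the post-impact state $x^+=\Delta(x^-)$ satisfies $h(x^+)=0$ and $L_fh(x^+)=0$. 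Since $\theta(q^+)=\theta^+$, the first line of \eqref{eq:hs-conditions} gives $h_{\mathrm s}(\theta^+,\beta)=0$ and $\partial h_{\mathrm s}/\partial\theta(\theta^+,\beta)=0$, and therefore the same Lie-derivative computation as in (i) yields $h_\beta(x^+)=h(x^+)=0$ and $L_fh_\beta(x^+)=L_fh(x^+)=0$. Hence $x^+\in\mathcal{Z}_\beta$, completing hybrid invariance.

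The argument is mostly bookkeeping; the only place one has to be careful is to remember that matching the outputs $h_\beta$ and $h$ at an instant is not enough—the definitions of $\mathcal{Z}$ and $\mathcal{Z}_\beta$ also involve $L_fh$ and $L_fh_\beta$, and so one genuinely needs \emph{both} $h_{\mathrm s}$ and $\partial h_{\mathrm s}/\partial\theta$ to vanish at $\theta^\pm$. This is exactly what the endpoint conditions in \eqref{eq:hs-conditions} were put in for, and it is the reason the two clauses of the lemma hold uniformly in $\beta$.
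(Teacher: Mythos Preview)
Your proposal is correct and follows essentially the same route as the paper: part~(i) uses the last line of \eqref{eq:hs-conditions} to make $h_\beta$ and $L_fh_\beta$ agree with $h$ and $L_fh$ at the pre-impact configuration, and part~(ii) combines (i) with the known impact invariance of $\mathcal{Z}$ together with the first line of \eqref{eq:hs-conditions} at $\theta^+$, plus forward invariance under $u_\beta^*$. The only minor imprecision is your claim that ``$(q,\dot q)\in\mathcal{S}$ implies $\theta(q)=\theta^-$''; that identification holds on $\mathcal{S}\cap\mathcal{Z}$ (or $\mathcal{S}\cap\mathcal{Z}_\beta$), not on all of $\mathcal{S}$, but since those are exactly the sets you are comparing, the argument goes through unchanged.
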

\begin{proof}
The proof of i) follows from the last condition of \eqref{eq:hs-conditions}, which ensures that before the end of the step, i.e., before the state arrives at $\mathcal{S}$, the outputs $h_\beta(q) = h(q)$. 
The proof of ii) follows from the hybrid invariance of $\mathcal{Z}$. Let $x\in\mathcal{S}\cap\mathcal{Z}_\beta\iff x\in\mathcal{S}\cap\mathcal{Z}$ by the first part of Lemma~\ref{lem:Z-properties}. Impact invariance of $\mathcal{Z}$ guarantees that $h(\Delta(x))=0$ and $L_f h(\Delta(x))=0$. Using the first two conditions of \eqref{eq:hs-conditions} we have $h_\beta(\Delta(x)) = h(\Delta(x))=0$ and  $L_f h_\beta(\Delta(x))=L_f h(\Delta(x))=0$ implying impact invariance for $\mathcal{Z}_\beta$. Invariance of $\mathcal{Z}_\beta$ under continuous dynamics is ensured by the choice of $u_\beta^*(x)$.
\end{proof}

The result of Lemma~\ref{lem:Z-properties} can be geometrically illustrated in Fig.~\ref{fig:manifold}. It can be seen that the modified output \eqref{eq:beta-output} smoothly deforms the zero dynamics surface $\mathcal{Z}$ associated with the original output \eqref{eq:hd-output}, but it does so in a way that the deformed surface $\mathcal{Z}_\beta$ coincides with $\mathcal{Z}$ at the beginning of the step (i.e., at $\Delta(\mathcal{S}\cap\mathcal{Z})$) and after 90\% of the step is completed.

\begin{figure}[h!]
\begin{centering}
\includegraphics[width=1.0\columnwidth]{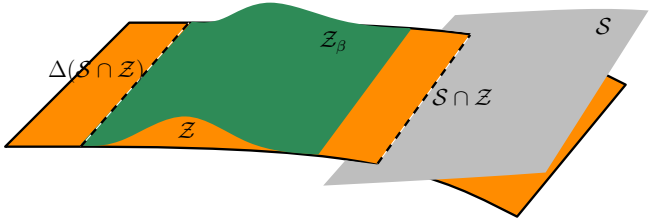} 
\par\end{centering}
\vspace{-0.1in}
\caption{Geometric illustration of $\mathcal{Z}$, $\mathcal{Z}_\beta$, and $\mathcal{S}$. The zero dynamic surface $\mathcal{Z}$ associated with $h(\theta)$ is orange, the zero dynamic surface $\mathcal{Z}_\beta$ associated with $h_\beta(\theta)$ is green, and the switching surface $\mathcal{S}$ is grey. The dashed lines represent $\mathcal{S}\cap\mathcal{Z}$ and $\Delta(\mathcal{S}\cap\mathcal{Z})$}.
\vspace{-0.1in}
\label{fig:manifold} 
\end{figure}

Before continuing with using the constructions of this section to generate a continuum of limit cycles, the following remark states that the stride length corresponding to the ``base'' limit cycle obtained for $\beta=0$ remains the same for all the limit cycles generated for $\beta \neq 0$.  
\begin{rem}\label{rem:theta+-same}
An outcome of Lemma~\ref{lem:Z-properties}i) is that $\theta^+$ and $\theta^-$ do not depend on $\beta$. Indeed, according to \cite[HH5)]{westervelt2003hybrid}, there exists a unique configuration $q^-$ where the state reaches $\mathcal{S}\cap\mathcal{Z}$. Since by Lemma~\ref{lem:Z-properties}i) $\mathcal{S}\cap\mathcal{Z}_\beta=\mathcal{S}\cap\mathcal{Z}$, it must be that $\theta^- = \theta(q^-)$ independent of $\beta$. Furthermore, the configuration $q$ remains unaltered through the impact; only the swing and stance legs switch roles. Hence, $q^-$ being independent of $\beta$ implies the same for $q^+$, which further leads to $\theta^+ = \theta(q^+)$ being the same regardless of $\beta$.
\end{rem}

\section{Continuum of Limit Cycles}
\label{sec:stability}

In this section we prove the existence of a continuum of limit cycles and study their stability properties using the method of Poincar\'e. Let $\varphi_\beta(t,x)$ be the maximal solution for the continuous dynamics of $\Sigma$. The time-to-impact function $T_I : TQ \times \mathbb{R}^{\mathrm{dim}(\beta)} \rightarrow \mathbb{R}_+$ can be defined as 
\begin{equation} \label{eq:time_to_impact}
    T_I(x,\beta) \!:=\! 
        \inf \left\{ t \geq t_k ~|~ \varphi_\beta(t,\Delta(x)) \in \mathcal{S}\right\}.
\end{equation}
where $k\in\mathbb{Z}_+$ denotes the step number and $t_k$ is the starting time of the $k$-th step. The Poincar\'e map $P:\mathcal{S}\times \mathbb{R}^{\mathrm{dim}(\beta)} \to \mathcal{S}$  is defined as
\begin{equation}\label{eq:poincare-full}
P(x,\beta):=\varphi_\beta(T_I(x,\beta),\Delta(x)) \enspace.
\end{equation}
In what follows, we assume that there exists a limit cycle for $\beta=0$, hence, $P(x^*,0)=x^*$. This is the ``base'' limit cycle.

\begin{prop}[Existence of Limit Cycles]\label{prop:existence-fp}
Let $(x^*,0)$ be a fixed point of $P(x,\beta)$ defined in \eqref{eq:poincare-full}. If $\frac{\partial P}{\partial x}|_{(x^*,0)}$ does not have an eigenvalue at 1, there exists a $\delta>0$ such that for each $\beta \in B_\delta(0) \subset \mathbb{R}^{\dim (\beta)}$, there is a unique solution of $P(x,\beta)=x$ given by $x = \mathrm{x}^*(\beta)$. Further, $\mathrm{x}^*(\beta)$ is continuous for $\beta\in B_\delta(0)$.
\end{prop}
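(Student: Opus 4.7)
The plan is to apply the Implicit Function Theorem (IFT) to the residual map $F(x,\beta) := P(x,\beta) - x$ at the point $(x^*,0)$. Since $P(x^*,0) = x^*$, we have $F(x^*,0)=0$, so the only non-trivial hypothesis to verify is invertibility of the partial Jacobian $\partial F/\partial x$ at $(x^*,0)$. A direct computation gives $\partial F/\partial x|_{(x^*,0)} = \partial P/\partial x|_{(x^*,0)} - I$, which is non-singular precisely when $1$ is not an eigenvalue of $\partial P/\partial x|_{(x^*,0)}$—exactly the hypothesis of the proposition. The IFT then yields $\delta>0$, a neighborhood of $x^*$ in $\mathcal{S}$, and a unique continuously differentiable (hence continuous) map $\mathrm{x}^*:B_\delta(0)\to\mathcal{S}$ with $\mathrm{x}^*(0)=x^*$ and $P(\mathrm{x}^*(\beta),\beta)=\mathrm{x}^*(\beta)$ for all $\beta\in B_\delta(0)$. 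Uniqueness in the statement is the local uniqueness furnished by the IFT: after possibly shrinking $\delta$, no other fixed point of $P(\cdot,\beta)$ lives in a fixed neighborhood of $x^*$.

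Before invoking the IFT, I need to argue that $P$ is $C^1$ in a neighborhood of $(x^*,0)$. Writing $P(x,\beta)=\varphi_\beta(T_I(x,\beta),\Delta(x))$ as in \eqref{eq:poincare-full}, this reduces to smoothness of three ingredients: (a) the impact map $\Delta$, which is algebraic in $x$ and independent of $\beta$; (b) the flow $\varphi_\beta(t,x)$, which depends smoothly on initial conditions, time, and $\beta$ because the closed-loop vector field $f(x)+g(x)u^*_\beta(x)$ is smooth in $\beta$—the parameter enters only through the polynomial $h_{\rm s}(\theta,\beta)$ inside $h_\beta$ and its Lie derivatives; and (c) the time-to-impact $T_I(x,\beta)$, which I would establish by a separate application of the IFT to the scalar equation $p_{\rm v}\bigl(q(\varphi_\beta(t,\Delta(x)))\bigr)=0$. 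The transversality condition $\dot{p}_{\rm v}<0$ at impact, built into the definition of $\mathcal{S}$ in \eqref{eq:switching-surface}, guarantees a nonzero time derivative along the trajectory at $t=T_I$, which provides the non-degeneracy needed for the second IFT application.

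The main obstacle is not the algebraic step, which is essentially one line, but rather the justification of the smoothness of the Poincar\'e map—particularly the differentiability of the time-to-impact function and the smooth dependence of $\varphi_\beta$ on $\beta$. These are standard for hybrid systems with transversal guards and smooth parameter dependence and are well documented in the HZD literature (e.g., \cite{westervelt2003hybrid}), so in the write-up I would cite the relevant results rather than redo the derivation. Once this regularity is granted, the proposition is an immediate consequence of the IFT applied to $F(x,\beta)=P(x,\beta)-x$.
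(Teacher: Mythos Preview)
Your proposal is correct and follows essentially the same route as the paper: define the residual $F(x,\beta)=P(x,\beta)-x$, observe that $\partial F/\partial x|_{(x^*,0)}=\partial P/\partial x|_{(x^*,0)}-I$ is invertible precisely when $1$ is not an eigenvalue, and invoke the Implicit Function Theorem. If anything, your discussion of the $C^1$ regularity of $P$ (smoothness of $\Delta$, of $\varphi_\beta$ in $(t,x,\beta)$, and of $T_I$ via transversality of the guard) is more thorough than the paper's, which simply asserts continuous differentiability and cites a standard reference.
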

\begin{proof}
The proof follows from the implicit function theorem in view of the fact that there exists a fixed point for $\beta=0$. Define $H: {\cal S} \times \mathbb{R}^{\dim (\beta)} \to TQ$ by the rule $H(x,\beta):= P(x,\beta)-x$; the function $H$ is continuously differentiable. We are given that $H(x^*,0)=0$ and $\frac{\partial P}{\partial x}|_{(x^*,0)}$ does not have 1 as an eigenvalue, i.e. $\mathrm{det}(\frac{\partial P}{\partial x}|_{(x^*,0)}-I)=\mathrm{det}(\frac{\partial H}{\partial x}|_{(x^*,0)}) \neq 0$. Then, by \cite[Theorem~2.5]{hartman1964ordinary}, there exists $\delta>0$ for which the statement of Proposition~\ref{prop:existence-fp} holds.
\end{proof}

Proposition~\ref{prop:existence-fp} ensures that an infinite number of limit cycles can be generated using a ``base''' limit cycle corresponding $\beta=0$, provided that 1 is not an eigenvalue of $\frac{\partial P}{\partial x}|_{(x^*,0)}$. It should be mentioned that for periodic orbits of smooth systems, the linearization of the Poincar\'e map at the  corresponding fixed point trivially possesses an eigenvalue at 1. However, as noted in~\cite[Section 3]{wendel2012rank}, this property does \emph{not} necessarily hold for periodic orbits of hybrid systems. In such systems, the trivial eigenvalue---i.e., the one associated with the Poincar\'e reduction---need not be located at 1; see\cite[Theorem 3]{wendel2012rank}. This is in fact the case for the Poincar\'e map \eqref{eq:poincare-full}, which does not possess any eigenvalue at 1.

Now we turn our attention towards the stability of these limit cycles. As in \cite[Theorem~1]{westervelt2003hybrid}, let $\xi = (\theta,\zeta)$ be coordinates on $\mathcal{Z}_\beta$, where $\zeta := \frac{1}{2}(D_1(q)\dot{q})^2$ and $D_1(q)$ is the first row of the inertia matrix $D$ in \eqref{eq:swing-dyn-q}. With the knowledge that $\mathcal{Z}_\beta$ is hybrid invariant from Lemma~\ref{lem:Z-properties}ii), using \cite[Theorem~3]{westervelt2003hybrid} we have that the reduced Poincar\'e map $\rho_\beta:=P(x,\beta)|_{\mathcal{Z_\beta}} : \mathcal{S}\cap\mathcal{Z} \to \mathcal{S}\cap\mathcal{Z}$ takes the form
\begin{equation}\label{eq:reduced-poinc-map}
\rho_\beta(\zeta) := \delta_{\mathrm{z},\beta}^2 \zeta - V_\beta(\theta^-) \enspace,
\end{equation}
where $\delta_{\mathrm{z},\beta}$ and $V_\beta(\theta^-)$ are constants\footnote{In fact, $V_\beta$ is a function of $\theta$ but in $\rho_\beta$ we only need its value at $\theta^-$.}. The reduced Poincar\'e map gives rise to a discrete dynamical system
\begin{equation}\label{eq:rho-system}
\zeta[k+1] = \rho_\beta(\zeta[k])\enspace,
\end{equation}
where $k\in\mathbb{Z}_+$. The fixed point of \eqref{eq:rho-system} given by
\begin{equation}\label{eq:fp}
\zeta_\beta^*=-\frac{V_\beta(\theta^-)}{1-\delta_{\mathrm{z},\beta}^2}
\end{equation} 
is exponentially stable if, and only if, $\delta_{\mathrm{z},\beta}<1$.

The following result establishes conditions under which the limit cycles generated from a locally exponentially stable base limit cycle are themselves locally exponentially stable.
\begin{theorem}[Stability of Limit Cycles]\label{thm:stability}
Let $\rho_\beta(\zeta)$ be the reduced Poincar\'e map as defined in \eqref{eq:reduced-poinc-map}. Suppose that $\zeta^*:=\zeta_0^*$ is an exponentially stable fixed point of $\rho(\zeta):=\rho_0(\zeta)$ corresponding to $\beta=0$. Then, for $B_\delta(0)$ established in Proposition~\ref{prop:existence-fp} and any $\beta \in B_\delta(0)$, $\zeta_\beta^*$ is an exponentially stable fixed point of \eqref{eq:rho-system}.
\end{theorem}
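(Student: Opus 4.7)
Because $\rho_\beta$ in \eqref{eq:reduced-poinc-map} is an affine scalar map with slope $\delta_{\mathrm{z},\beta}^2$, exponential stability of the fixed point $\zeta_\beta^*$ for the iteration \eqref{eq:rho-system} is equivalent to the single inequality $\delta_{\mathrm{z},\beta}^2 < 1$. The hypothesis of the theorem supplies this strict inequality at $\beta=0$, so the plan is to propagate it onto a neighborhood of the origin by showing that $\beta \mapsto \delta_{\mathrm{z},\beta}^2$ is continuous at $\beta=0$. Any such neighborhood can then be intersected with the ball $B_\delta(0)$ produced by Proposition~\ref{prop:existence-fp}, and the conclusion follows at once.

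\textbf{Continuity of the slope.} For the continuity step I would exploit the fact that $h_\beta$, and hence the feedback $u_\beta^*$, depend smoothly on $\beta$ by construction of $h_{\rm s}(\theta,\beta)$, while the impact map $\Delta$ does not depend on $\beta$ at all. Standard continuous-dependence theorems for ODEs, combined with the transversality of solutions to $\mathcal{S}$ that is implicit in the definition of $T_I$, then yield joint continuity of $P(x,\beta)$ in $(x,\beta)$. By Lemma~\ref{lem:Z-properties}i) the domain of the reduced Poincar\'e map is $\beta$-independent, $\mathcal{S}\cap\mathcal{Z}_\beta=\mathcal{S}\cap\mathcal{Z}$, so $\rho_\beta$ is simply the restriction of the $\beta$-continuous map $P(\cdot,\beta)$ to a fixed one-dimensional set, and its slope $\delta_{\mathrm{z},\beta}^2$ inherits continuity in $\beta$. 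As a stronger aside, the conditions on $h_{\rm s}$ at $\theta^+$ and on $[\theta_{\rm s},\theta^-]$ force the tangent spaces of $\mathcal{Z}_\beta$ and $\mathcal{Z}$ to coincide at both pre- and post-impact configurations, which strongly suggests that $\delta_{\mathrm{z},\beta}$ is in fact independent of $\beta$; mere continuity, however, is all the argument needs.

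\textbf{Closing the argument, and the main obstacle.} Shrinking $\delta$ if necessary so that $\delta_{\mathrm{z},\beta}^2<1$ throughout $B_\delta(0)$, Proposition~\ref{prop:existence-fp} supplies the fixed point $\zeta_\beta^*$, and the scalar-affine form of $\rho_\beta$ converts the slope inequality directly into local exponential stability of $\zeta_\beta^*$ for \eqref{eq:rho-system}. The main obstacle, as I see it, is this very continuity claim for $\delta_{\mathrm{z},\beta}^2$: the coefficient is a priori defined through a reduced dynamics living on the moving manifold $\mathcal{Z}_\beta$, and rigorously transferring continuity requires recasting it as a quantity on the fixed section $\mathcal{S}\cap\mathcal{Z}$, with Lemma~\ref{lem:Z-properties} pinning down a common Poincar\'e section and smoothness of $h_{\rm s}$ in $\beta$ pulling continuity of the full flow down to continuity of the reduced map and its slope. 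No estimate of a basin of attraction is needed anywhere.
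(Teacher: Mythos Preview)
Your continuity argument is valid in spirit, but as written it proves a slightly weaker statement than the theorem asserts. The conclusion is phrased for the \emph{same} ball $B_\delta(0)$ produced by Proposition~\ref{prop:existence-fp}; your plan instead obtains $\delta_{\mathrm{z},\beta}^2<1$ only on some neighborhood of the origin and then ``shrinks $\delta$ if necessary.'' That shrinking step is exactly what the theorem does \emph{not} allow, since $\delta$ is fixed once Proposition~\ref{prop:existence-fp} is invoked.

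The paper closes this gap by establishing what you flagged as a ``stronger aside'': $\delta_{\mathrm{z},\beta}$ is in fact independent of $\beta$, so no shrinking is ever needed. The argument is a two-line computation rather than a continuity/ODE-dependence result. From \cite[Section~IV.A]{westervelt2003hybrid}, $\delta_{\mathrm{z},\beta}$ depends only on the pre-impact configuration $q^-$ and on $\frac{\partial h_\beta}{\partial q}(q^-)$. By Remark~\ref{rem:theta+-same}, $q^-$ is the same for every $\beta$, and by Lemma~\ref{lem:Z-properties}i) (specifically, the vanishing of $h_{\rm s}$ on $[\theta_{\rm s},\theta^-]$), $\frac{\partial h_\beta}{\partial q}(q^-)=\frac{\partial h}{\partial q}(q^-)$. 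Hence $\delta_{\mathrm{z},\beta}=\delta_{\rm z}<1$ for all $\beta$, not just those in a small ball. Your intuition that the matching tangent spaces of $\mathcal{Z}_\beta$ and $\mathcal{Z}$ at the pre- and post-impact points force constancy of $\delta_{\mathrm{z},\beta}$ is exactly the mechanism the paper exploits; promoting that aside to the main argument would make your proof both shorter and strictly match the stated conclusion.
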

\begin{proof}
In \cite[Section~IV.A]{westervelt2003hybrid}, it is shown that $\delta_{\mathrm{z},\beta}$ depends on the impact configuration of the robot $q^-$ and $\frac{\partial h_\beta}{\partial q}(q^-)$. By Remark~\ref{rem:theta+-same}, $q^-$ is the same for all $\beta$ and by Lemma~\ref{lem:Z-properties}i), $\frac{\partial h_\beta}{\partial q}(q^-)=\frac{\partial h}{\partial q}(q^-)$. Thus, for all $\beta\in B_\delta(0)$, we have\footnote{As $\delta_{\rm z,\beta}$ is the same for all $\beta\in B_\delta(0)$, we use $\delta_{\rm z}:=\delta_{\mathrm{z},0}$ from hereon.} $\delta_{\mathrm{z},\beta}=\delta_{\rm z}<1$, establishing the result. 
\end{proof}
The exponential stability of $\rho_\beta(\zeta)$ can be lifted to local exponential stability of the full-order Poincar\'e map $P(x,\beta)$ by choosing a fast enough convergence rate for the output dynamics; this can be achieved through the CLF based design of $\nu$ as in~\cite[Theorem~2]{ames2014rapidly}. 
The existence of a continuum of locally exponentially stable limit cycles is helpful in extending the richness of the biped's behaviors by switching among these elementary limit cycles. However, switching must ensure that the stability and modeling constraints are not violated. In what follows, we focus on guaranteeing that the biped remains well behaved under switching, provided that there are no external disturbances exciting dynamics outside of the zero dynamics surface.

\section{Switching Among Limit Cycles}
\label{sec:switch}

We consider a finite set $\mathbb{B} = \{ \beta_p, p\in\mathcal{P} \}$ of parameter arrays $\beta$, indexed by $p \in \mathcal{P}$. Each parameter array $\beta_p$  corresponds to an exponentially stable limit cycle computed using the output modulation method presented in Section~\ref{sec:mod_con}. Let $\sigma:\mathbb{Z}_+ \to \mathcal{P}$ be a switching signal mapping the step number $k$ to the index $p=\sigma(k)$ of the parameters $\beta_p$ that characterize the controller applied at that step. This gives rise to a discrete switched system of the form\footnote{With an abuse of notation from hereon we will use $p\in\mathcal{P}$ as a subscript instead of $\beta$. Also we use $P_p(x)$ instead of $P(x,\beta_p)$.}
\begin{equation}\label{eq:switch-system-full}
x[k+1] = P_{\sigma(k)}(x[k]) \enspace.
\end{equation}
Note that the switched system \eqref{eq:switch-system-full} differs from those in \cite{liberzon2003switching} in that the individual systems do not share a common equilibrium. 
Obtaining conditions on the switching signal which guarantee that the full-order dynamics \eqref{eq:switch-system-full} remains well behaved is computationally prohibitive; although~\cite[Theorem 1]{motahar2016composing} can be used, it  essentially requires estimating the basin of attraction of the fixed points of \eqref{eq:switch-system-full} in the full order system. However, for a class of practical applications, where a higher-level logic governs switching---as in motion planning amidst obstacles~\cite{motahar2016composing} or supervisory adaptive control~\cite{veer2017supervisory}---the analysis can be simplified by considering the case where no external disturbances are present.
In this case, the following result ensures that if \eqref{eq:switch-system-full} is initialized on $\mathcal{S}\cap\mathcal{Z}$, then the state always evolves on the corresponding $\mathcal{Z}_p$, despite the presence of switching among $p \in \mathcal{P}$. 

\begin{prop}\label{prop:switch-zd}
Let $\sigma$ be the switching signal applied on \eqref{eq:switch-system-full}. If $x[0]\in\mathcal{S}\cap\mathcal{Z}$, then for any $k\in\mathbb{Z}_+$, the following holds: $\varphi_{\sigma(k+1)}(t,\Delta(x[k])) \in \mathcal{Z}_{\sigma(k+1)}$ for $t\in[t_k,T_I(x,\beta_{\sigma(k+1)}))$.
\end{prop}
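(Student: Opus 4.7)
The plan is to prove the claim by induction on the step index $k$, exploiting Lemma~\ref{lem:Z-properties} at every step to move between the nominal manifold $\mathcal{Z}$ and the various deformed manifolds $\mathcal{Z}_{\sigma(k)}$.

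First I would unpack the hybrid execution: at step $k$, the controller indexed by $\sigma(k)$ is active, so the continuous flow is the one rendering $\mathcal{Z}_{\sigma(k)}$ invariant, and the endpoint of that continuous arc defines $x[k] \in \mathcal{S}$. The key observation is Lemma~\ref{lem:Z-properties}i): since $\mathcal{S}\cap\mathcal{Z}_p = \mathcal{S}\cap\mathcal{Z}$ for every $p\in\mathcal{P}$, a state reaching $\mathcal{S}$ on one deformed manifold can equivalently be viewed as lying on $\mathcal{S}\cap\mathcal{Z}$, and therefore on any other $\mathcal{S}\cap\mathcal{Z}_q$. This is what allows an arbitrary switch $\sigma(k)\mapsto\sigma(k+1)$ without leaving the zero-dynamics family.

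For the base case $k=0$, I would start from the hypothesis $x[0]\in\mathcal{S}\cap\mathcal{Z}$, rewrite it as $x[0]\in\mathcal{S}\cap\mathcal{Z}_{\sigma(1)}$ via Lemma~\ref{lem:Z-properties}i), and invoke Lemma~\ref{lem:Z-properties}ii) to conclude that $\Delta(x[0])\in\mathcal{Z}_{\sigma(1)}$. Continuous invariance of $\mathcal{Z}_{\sigma(1)}$ under $\dot{x}=f(x)+g(x)u^*_{\sigma(1)}(x)$—which was built into the construction of $u^*_\beta$ in Section~\ref{sec:mod_con}—then gives $\varphi_{\sigma(1)}(t,\Delta(x[0]))\in\mathcal{Z}_{\sigma(1)}$ for $t\in[t_0,T_I(x,\beta_{\sigma(1)}))$, as required.

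For the inductive step, assume $\varphi_{\sigma(k)}(t,\Delta(x[k-1]))\in\mathcal{Z}_{\sigma(k)}$ on the corresponding interval. Taking the limit as $t\uparrow T_I(x,\beta_{\sigma(k)})$ places $x[k]$ in $\mathcal{S}\cap\mathcal{Z}_{\sigma(k)}$, and applying Lemma~\ref{lem:Z-properties}i) once more yields $x[k]\in\mathcal{S}\cap\mathcal{Z}$. Now I am in exactly the situation of the base case, but with the new controller index $\sigma(k+1)$: rewriting $x[k]\in\mathcal{S}\cap\mathcal{Z}_{\sigma(k+1)}$, invoking hybrid invariance to push $\Delta(x[k])$ into $\mathcal{Z}_{\sigma(k+1)}$, and then using continuous invariance of $\mathcal{Z}_{\sigma(k+1)}$ closes the induction. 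There is no real obstacle here beyond careful bookkeeping of the switch of indices across the impact; the result is essentially a direct packaging of Lemma~\ref{lem:Z-properties} into the switched setting, and I would expect the write-up to be short and entirely transparent.
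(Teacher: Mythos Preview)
Your proposal is correct and mirrors the paper's own argument: both proceed by induction on $k$, using Lemma~\ref{lem:Z-properties}i) to identify $\mathcal{S}\cap\mathcal{Z}_{\sigma(k)}=\mathcal{S}\cap\mathcal{Z}=\mathcal{S}\cap\mathcal{Z}_{\sigma(k+1)}$ at the impact instant, Lemma~\ref{lem:Z-properties}ii) to push $\Delta(x[k])$ into $\mathcal{Z}_{\sigma(k+1)}$, and the continuous invariance furnished by $u^*_{\sigma(k+1)}$ to carry the flow through the next step. The only cosmetic difference is that you spell out the base case separately, whereas the paper folds it into the induction; the substance is identical.
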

\begin{proof}
The poof is by induction. The induction begins at $k=0$ where $x[0]\in\mathcal{S}\cap\mathcal{Z}$. Let $x[k]\in\mathcal{S}\cap\mathcal{Z} = \mathcal{S}\cap\mathcal{Z}_{\sigma(k)}$ by Lemma~\ref{lem:Z-properties}i). By Lemma~\ref{lem:Z-properties}ii) we have that $\mathcal{Z}_{\sigma(k+1)}$ is hybrid invariant, hence it is also impact invariant. Thus, $\Delta(\mathcal{S}\cap\mathcal{Z}_{\sigma(k+1)})\subset \mathcal{Z}_{\sigma(k+1)}$. However, by Lemma~\ref{lem:Z-properties}i) we also have $\Delta(\mathcal{S}\cap\mathcal{Z}_{\sigma(k)})=\Delta(\mathcal{S}\cap\mathcal{Z}_{\sigma(k+1)})$. Thus, we obtain $\Delta(\mathcal{S}\cap\mathcal{Z}_{\sigma(k)})\subset \mathcal{Z}_{\sigma(k+1)}$ which means that $\Delta(x[k])\in\mathcal{Z}_{\sigma(k+1)}$. The choice of the control law $u_{\sigma(k+1)}^*$ ensures invariance of $\mathcal{Z}_{\sigma(k+1)}$ under continuous dynamics, i.e. $\varphi_{\sigma(k+1)}(t,\Delta(x[k])) \in \mathcal{Z}_{\sigma(k)}$ for $t\in[t_k,T_I(x,\beta_{\sigma(k)}))$. Hence, on the next return to $\mathcal{S}$, we have $x[k+1]\in\mathcal{S}\cap\mathcal{Z}_{\sigma(k+1)}=\mathcal{S}\cap\mathcal{Z}$, thus completing the proof.
%
\end{proof}

Proposition~\ref{prop:switch-zd} allows us to restrict our study to the discrete switched system given by the restricted Poincar\'e map
\begin{equation}\label{eq:switch-system-reduced}
\zeta[k+1] = \rho_{\sigma(k)}(\zeta[k]) \enspace.
\end{equation}
Next, we present conditions under which the evolution of \eqref{eq:switch-system-reduced} remains bounded under arbitrary switching signals. Before we proceed, a few definitions are in order. Let $\zeta_{\rm lb}^*:= \min_{p\in\mathcal{P}} \zeta^*_p$, and $\zeta_{\rm ub}^*:= \max_{p\in\mathcal{P}} \zeta^*_p$. Let $K_p:= \max_{\theta^+\leq\theta\leq\theta^-} V_p(\theta)$, and $K:= \max_{p\in\mathcal{P}} K_p$. With these definitions we present the following theorem.
\begin{theorem}[Switching Between Limit Cycles]\label{thm:switching}
Consider \eqref{eq:switch-system-reduced} with a switching signal $\sigma$. Let $\zeta_{\rm lb}^* \geq K/\delta_{\rm z}^2$. Then, for any $\sigma$ the solution of \eqref{eq:switch-system-reduced} satisfies
\begin{equation}\nonumber
\zeta_{\rm lb}^*\leq\zeta[0]\leq \zeta_{\rm ub}^* \implies \zeta_{\rm lb}^*\leq\zeta[k]\leq \zeta_{\rm ub}^* \enspace,
\end{equation}
for all $k\in\mathbb{Z}_+$.
\end{theorem}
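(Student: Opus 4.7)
The plan is to exploit the fact that the reduced Poincaré map $\rho_p$ is affine in $\zeta$ with a contraction slope $\delta_{\rm z}^2<1$ that, thanks to the proof of Theorem~\ref{thm:stability}, is common to every $p\in\mathcal{P}$; only the offset $V_p(\theta^-)$ varies with $p$. Using \eqref{eq:fp} one can solve $V_p(\theta^-)=-(1-\delta_{\rm z}^2)\zeta_p^*$ and rewrite
\begin{equation}\nonumber
\rho_p(\zeta)=\delta_{\rm z}^2\,\zeta+(1-\delta_{\rm z}^2)\,\zeta_p^*\enspace,
\end{equation}
which displays $\rho_p(\zeta)$ as a convex combination of the current state and the mode's fixed point. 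From this form it is already transparent why any interval that contains every $\zeta_p^*$ ought to be forward invariant under arbitrary mode choices.

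The inductive step is then essentially a one-liner. Suppose $\zeta_{\rm lb}^*\le \zeta[k]\le \zeta_{\rm ub}^*$. By the definitions of $\zeta_{\rm lb}^*$ and $\zeta_{\rm ub}^*$, we also have $\zeta_{\rm lb}^*\le \zeta_{\sigma(k)}^*\le \zeta_{\rm ub}^*$, and since $\delta_{\rm z}^2\in[0,1)$ the convex combination above satisfies $\zeta_{\rm lb}^*\le \zeta[k+1]\le \zeta_{\rm ub}^*$, irrespective of the value of $\sigma(k)$. Starting from the hypothesis at $k=0$ and iterating closes the induction for every switching signal $\sigma$.

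Before this induction can be trusted, however, I must verify that the Poincaré map is actually well defined at each step, i.e., that the zero dynamics integrate from the post-impact value $\delta_{\rm z}^2\zeta[k]$ at $\theta^+$ all the way to $\theta^-$ without $\zeta(\theta)$ vanishing (which would correspond to the stance leg stalling mid-step). Along $\mathcal{Z}_p$ one has $\zeta(\theta)=\delta_{\rm z}^2\zeta[k]-V_p(\theta)$ under the usual normalization $V_p(\theta^+)=0$, so positivity of $\zeta$ throughout the step is equivalent to $\delta_{\rm z}^2\zeta[k]\ge V_p(\theta)$ for every $\theta\in[\theta^+,\theta^-]$, i.e.\ to $\delta_{\rm z}^2\zeta[k]\ge K_p$ for the active mode $p=\sigma(k)$. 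Combining the hypothesis $\zeta_{\rm lb}^*\ge K/\delta_{\rm z}^2$ with $K\ge K_p$ yields $\delta_{\rm z}^2\zeta[k]\ge \delta_{\rm z}^2\zeta_{\rm lb}^*\ge K\ge K_p$, so the continuous flow reaches $\mathcal{S}\cap\mathcal{Z}$ within each step and $\rho_{\sigma(k)}(\zeta[k])$ exists for every $k\in\mathbb{Z}_+$.

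The main subtlety is thus not algebraic but conceptual: the hypothesis $\zeta_{\rm lb}^*\ge K/\delta_{\rm z}^2$ serves purely as a well-posedness guarantee for the continuous phase rather than as a stability-margin condition, and once well-posedness is secured the common slope $\delta_{\rm z}^2$ automatically renders $[\zeta_{\rm lb}^*,\zeta_{\rm ub}^*]$ invariant under any switching signal whatsoever. I would expect the only delicate point in the writeup to be documenting the $V_p(\theta^+)=0$ normalization and the identity $\zeta(\theta)=\delta_{\rm z}^2\zeta[k]-V_p(\theta)$ for the continuous evolution; everything else follows immediately from the convex-combination form of $\rho_p$.
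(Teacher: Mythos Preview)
Your proposal is correct and follows essentially the same route as the paper: both rewrite $\rho_p(\zeta)=\delta_{\rm z}^2\zeta+(1-\delta_{\rm z}^2)\zeta_p^*$ via \eqref{eq:fp}, both observe that the hypothesis $\zeta_{\rm lb}^*\ge K/\delta_{\rm z}^2$ is there to ensure each $\rho_p$ is well defined on $[\zeta_{\rm lb}^*,\zeta_{\rm ub}^*]$, and both close with the same one-line induction. Your convex-combination phrasing and the explicit discussion of $\zeta(\theta)=\delta_{\rm z}^2\zeta[k]-V_p(\theta)$ are a bit more self-contained than the paper's appeal to \cite[Theorem~3]{westervelt2003hybrid} for the domain of definition, but the underlying argument is identical.
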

\begin{proof}
The domain of definition of $\rho_p(\zeta)$ is given by $\zeta \geq K_p/\delta_{\rm z}^2$ \cite[Theorem~3]{westervelt2003hybrid}. Thus, $\zeta_{\rm lb}^* \geq K/\delta_{\rm z}^2$ ensures that the fixed point for each $p\in\mathcal{P}$ is in the domain of definition of every other fixed point. Thereby, ensuring that the reduced $\rho_p$ is well defined for any $\zeta\geq\zeta_{\rm lb}^*$ and for any $p\in\mathcal{P}$. 
We prove the boundedness of the state by induction. It is given that $\zeta_{\rm lb}^*\leq\zeta[0]\leq \zeta_{\rm ub}^*$. Let us assume that for some $k\in\mathbb{Z}_+$,  $\zeta_{\rm lb}^*\leq\zeta[k]\leq \zeta_{\rm ub}^*$. Then, $\zeta[k+1] = \delta_{\rm z}^2 \zeta[k] -V_{\sigma(k)}(\theta^-)$, and substituting \eqref{eq:fp} results in 
\begin{equation}\label{eq:poinc-red-system-fp}
\zeta[k+1] = \delta_{\rm z}^2 \zeta[k]+ (1-\delta_{\rm z}^2) \zeta_{\sigma(k)}^* \enspace.
\end{equation}
Using $1-\delta_{\rm z}^2>0$, and $\zeta_{\rm lb}^*\leq\zeta_{\sigma(k)}^*\leq \zeta_{\rm ub}^*$ in \eqref{eq:poinc-red-system-fp} we can bound $\zeta[k+1]$ by
\begin{align}\nonumber
\delta_{\rm z}^2 (\zeta[k]-\zeta_{\rm lb}^*) + \zeta_{\rm lb}^* \leq \zeta[k+1] \leq \delta_{\rm z}^2 (\zeta[k]-\zeta_{\rm ub}^*) + \zeta_{\rm ub}^* \enspace.
\end{align}
Noting that $\zeta[k]-\zeta_{\rm lb}^*\geq 0 $ and $\zeta[k]-\zeta_{\rm ub}^* \leq 0$ ensures that $\zeta_{\rm lb}^*\leq\zeta[k+1]\leq \zeta_{\rm ub}^*$. Hence, by induction we get that for all $k\in\mathbb{Z}_+$, $\zeta_{\rm lb}^*\leq\zeta[k]\leq \zeta_{\rm ub}^*$.
\end{proof}

Theorem~\ref{thm:switching} provides conditions under which $\zeta$ remains bounded and within the domain of definition of every fixed point, thereby allowing for arbitrary switches. It should be emphasized however that Theorem~\ref{thm:switching} does not account for modeling constraints like motor saturation torque, friction, and ground reaction force, which may be violated during the transients introduced by switching. Hence, arbitrary switching may not be realistically feasible.

To address this issue, a directed graph $\mathcal{G}$ of feasible switches---i.e., switches which satisfy Theorem~\ref{thm:switching} and do not violate modeling constraints---can be constructed. The nodes of the graph are fixed points that respect the constraints and the directed edges represent feasible switches. To avoid violation of the modeling constraints, the switching logic needs to wait long enough for the state to get sufficiently close to the destination node before switching occurs. This can be achieved by imposing a dwell-time constraint on the switching signal. The dwell-time $N \in \mathbb{Z}_+$ is the minimum number of steps the biped should take before a switching occurs, i.e. $\sigma(k_i+k)=\sigma(k_i)$ for all $k<N$, where $k_i$ is the discrete-time of the last switch.
The following proposition provides a bound on the dwell-time which guarantees that any state within $\epsilon>0$ of the source fixed point can be commuted within $\epsilon$ of the destination fixed point.



\begin{prop}[Dwell-Time]\label{prop:dwell-time}
Consider \eqref{eq:switch-system-reduced} and assume that the conditions of Theorem~\ref{thm:switching} hold. Let $p, q \in \mathcal{P}$, and assume that for $\epsilon > 0$, $\zeta[k] \in B_\epsilon(\zeta_p^*)$.  Then, for any switching signal $\sigma$ with dwell-time $N_{p \to q} \in \mathbb{Z}_+$ satisfying
\begin{equation}\label{eq:dwell-time}
N_{p \to q} > \frac{1}{2} \frac{\log\Big(|\zeta_p^*-\zeta_q^*|/\epsilon + 1 \Big)}{\log\big(1/\delta_{\rm z}\big)}
\end{equation}
we have $\zeta[k+N_{p \to q}] \in B_\epsilon(\zeta_q^*)$.
\end{prop}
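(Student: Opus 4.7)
The plan is to exploit the affine, contractive form of the reduced Poincaré map that was derived in the proof of Theorem~\ref{thm:switching}, namely
\begin{equation}\nonumber
\zeta[k+1] = \delta_{\rm z}^2\, \zeta[k] + (1-\delta_{\rm z}^2)\, \zeta_{\sigma(k)}^*,
\end{equation}
which uses the crucial fact (established in Theorem~\ref{thm:stability}) that $\delta_{\rm z,\beta}$ is independent of $\beta$. Under the dwell-time assumption, $\sigma$ holds the value $q$ for the $N_{p \to q}$ steps following time $k$, so the dynamics on this window is time-invariant with fixed point $\zeta_q^*$.

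The first step is to subtract $\zeta_q^*$ from both sides of the recursion to obtain the error dynamics $\zeta[k+1] - \zeta_q^* = \delta_{\rm z}^2\,(\zeta[k] - \zeta_q^*)$. Iterating $N_{p \to q}$ times gives the closed-form
\begin{equation}\nonumber
\zeta[k+N_{p \to q}] - \zeta_q^* = \delta_{\rm z}^{2 N_{p \to q}}\,(\zeta[k] - \zeta_q^*).
\end{equation}
The second step is to bound $|\zeta[k] - \zeta_q^*|$ using the hypothesis $\zeta[k] \in B_\epsilon(\zeta_p^*)$ together with the triangle inequality, yielding $|\zeta[k]-\zeta_q^*| < \epsilon + |\zeta_p^* - \zeta_q^*|$. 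Combining these gives
\begin{equation}\nonumber
|\zeta[k+N_{p \to q}] - \zeta_q^*| < \delta_{\rm z}^{2 N_{p \to q}}\bigl(\epsilon + |\zeta_p^* - \zeta_q^*|\bigr).
\end{equation}

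The third step is to impose $|\zeta[k+N_{p \to q}] - \zeta_q^*| < \epsilon$ and solve for $N_{p \to q}$. Requiring $\delta_{\rm z}^{2 N_{p \to q}}(\epsilon + |\zeta_p^* - \zeta_q^*|) \leq \epsilon$, taking logs (noting $\log(1/\delta_{\rm z}) > 0$ since $\delta_{\rm z} < 1$ by Theorem~\ref{thm:stability}), and rearranging produces exactly the bound \eqref{eq:dwell-time}. One only needs to check that Theorem~\ref{thm:switching} ensures $\zeta[j]$ stays in the common domain of definition of every $\rho_p$ throughout the dwell interval, so the recursion is well-posed at each step.

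There is no real obstacle here: the proof reduces to iterating a scalar affine contraction and a triangle-inequality estimate. The only subtlety worth flagging explicitly is the uniformity of the contraction rate $\delta_{\rm z}$ across all $p \in \mathcal{P}$; without that, one would instead obtain $\prod_{j=0}^{N-1}\delta_{{\rm z},\sigma(k+j)}^2$ in front of the error and the clean logarithmic bound in \eqref{eq:dwell-time} would not arise.
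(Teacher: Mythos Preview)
Your proposal is correct and follows essentially the same route as the paper: iterate the scalar affine contraction $\zeta[k+1]-\zeta_q^*=\delta_{\rm z}^{2}(\zeta[k]-\zeta_q^*)$ over the dwell window, bound the initial error via the triangle inequality and the hypothesis $\zeta[k]\in B_\epsilon(\zeta_p^*)$, and then invert the resulting inequality to recover \eqref{eq:dwell-time}. Your explicit remarks that $\sigma$ is held at $q$ on the window and that Theorem~\ref{thm:switching} keeps the iterates in the common domain of definition are exactly the implicit assumptions underlying the paper's induction, so nothing is missing.
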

\begin{proof}
By induction on \eqref{eq:poinc-red-system-fp},
\begin{equation}\label{eq:n-step-red-poinc}
\zeta[k+n] = \delta_{\rm z}^{2n} (\zeta[k]-\zeta_q^*) + \zeta_q^* \enspace.
\end{equation}
By triangle inequality and the fact that $\zeta[k]\in B_\epsilon(\zeta_p^*)$
\begin{align}
|\zeta[k]-\zeta_q^*| & \leq |\zeta[k]-\zeta_p^*| + |\zeta_p^*-\zeta_q^*| \enspace, \nonumber \\
 & < \epsilon + |\zeta_p^*-\zeta_q^*| \enspace. \label{eq:trig-ineq}
\end{align}
Using \eqref{eq:n-step-red-poinc} and \eqref{eq:trig-ineq} we get
\begin{align}
|\zeta[k+N_{p \to q}]-\zeta_q^*| & = \delta_{\rm z}^{2N_{p \to q}} |(\zeta[k]-\zeta_q^*)|\enspace, \nonumber \\
& < \delta_{\rm z}^{2N_{p \to q}} \big( \epsilon + |\zeta_p^*-\zeta_q^*| \big) <\epsilon \enspace, \nonumber
\end{align}
and the last inequality follows from \eqref{eq:dwell-time}.
\end{proof}

It is important to mention that since $\zeta[k]$ is available through state feedback, switching from $\zeta_p^*$ to $\zeta_q^*$ can be determined by monitoring when $\zeta[k]$ enters the $\epsilon$-ball of the target fixed point, without checking \eqref{eq:dwell-time}. The importance of the availability of the bound \eqref{eq:dwell-time} lies in the fact that it can be used to weight the edges of feasible transitions \emph{a priori}; that is, in the planning stage, before a switching sequence is executed. With the dwell-time bound of Proposition~\ref{prop:dwell-time} determining the weight on the edges of the transition graph, a path can be found that takes the state from the source to the destination node in the least number of steps.

\section{Limit Cycle Switching for Speed Change}
\label{sec:speed}

In this section we particularize the theoretical results of Sections~\ref{sec:stability} and~\ref{sec:switch} to obtain provably stable speed planning for the underactuated bipedal robot in Fig.~\ref{fig:model}. 
We begin by designing $h_{\rm d}(\theta)$ in \eqref{eq:hd-output} by following the method in \cite{westervelt2003hybrid} to obtain an exponentially stable periodic gait with an average speed of 0.75 m/s. Then, $h_{\rm s}(\theta,\beta)$ satisfying \eqref{eq:hs-conditions} is augmented to $h_{\rm d}$. The average speed of the biped over a step is represented as a function $v:\mathcal{S}\times \mathbb{R}^{\mathrm{dim}(\beta)}\to \mathbb{R}$. Let $v_{\rm des}$ be the desired speed of the new limit cycle. To generate a limit cycle with an average speed of $v_{\rm des}$ choose $\beta$ as
\begin{equation}\label{eq:beta-generation}
\beta =  \Big(\frac{\partial v}{\partial x}\Big|_{(x^*,0)}\Big)^\dagger (v_{\rm des}-v(x^*,0)) \enspace,
\end{equation}
where $\dagger$ represents the right pseudoinverse. As \eqref{eq:beta-generation} is based on linearization, we do not obtain a $\beta$ that realizes $v_{\rm des}$ exactly; however, the sign of $(v_{\rm des}-v(x^*,0))$ holds true in the sense that if $(v_{\rm des}-v(x^*,0))>0$ the average speed of the new limit cycle is faster than $v(x^*,0)$ while if $(v_{\rm des}-v(x^*,0))<0$ the average speed of the new limit cycle is slower than $v(x^*,0)$. Furthermore, it was observed that $v_{\rm des,1}>v_{\rm des,2}$ resulted in limit cycles that satisfy $v(\mathrm{x}^*(\beta_1),\beta_1)>v(\mathrm{x}^*(\beta_2),\beta_2)$.

For a small enough choice of $(v_{\rm des}-v(x^*,0))$, we can ensure that $\beta \in B_\delta(0)$, and hence by Proposition~\ref{prop:existence-fp} we can find limit cycles for each corresponding $(v_{\rm des}-v(x^*,0))$. Using this approach we were able to generate limit cycles anywhere between 0.42 m/s to 0.81 m/s that satisfied the modeling constraints, i.e. saturation torque of 100 Nm, coefficient of friction below 0.8, and a minimum upwards ground reaction force of 100 N. The projection of these limit cycles on the $(\theta,\zeta)$ plane can be seen in Fig.~\ref{fig:continuum}. 

A set of 79 limit cycles was extracted with speeds varying from 0.42 m/s to 0.81 m/s. They were indexed by $p\in\mathcal{P}$ in an increasing order of their speed. The maximum speed gap between any two consecutively indexed limit cycles was 0.01 m/s. If such accuracy is not desired, the number of limit cycles can be reduced. For this set of limit cycles, $\zeta_{\rm ub}^* = 247.2~\mathrm{(kg m^2/s)^2}$, $\zeta_{\rm lb}^* = 120.8~\mathrm{(kg m^2/s)^2}$, and $K/\delta_{\rm z}^2 = 90.3~\mathrm{(kg m^2/s)^2}$, thus $\zeta_{\rm lb}^* \geq K/\delta_{\rm z}^2$ satisfying Theorem~\ref{thm:switching}. Hence, $\zeta[k]\in [120.8,247.2]$ for all $k\in\mathbb{Z}_+$.

The directed graph---as shown in Fig.~\ref{fig:digraph}---is constructed by running numerical simulations wherein we switch from each fixed point to every other and check the modeling constraints. The dwell-time used as a weight on the edges of $\mathcal{G}$ is computed using \eqref{eq:dwell-time} with $\epsilon=2$. The graph is strongly connected, i.e., every limit cycle can be reached from every other limit cycle, but it may require multiple switches. It is observed from the graph that speeding up usually does not require a long sequence of switches, however, slowing down does. This is illustrated in Fig.~\ref{fig:digraph} by marking out the 12 limit cycle switches required to go from 0.81 m/s to 0.42 m/s. On the contrary, to go from 0.42 m/s to 0.81 m/s a single switch is sufficient.

For the purpose of simulation consider the scenario when the biped is assumed to start on the limit cycle corresponding to 0.81 m/s. The desired speed is reduced to 0.42 m/s and, when convergence is achieved, it is changed back to 0.81 m/s. The speed convergence of the biped can be seen in Fig.~\ref{fig:speed_change}. It takes about 70 s for the biped to go from 0.81 m/s to 0.42 m/s while only 12 s to go from 0.42 m/s to 0.81 m/s. This disparity in convergence times occur due to the requirement of more switches for deceleration than acceleration as discussed earlier. All the modeling constraints were satisfied during the transients as well.


\begin{figure*}[t]
\centering
\subfigure[]
{
\includegraphics[width=0.41\textwidth]{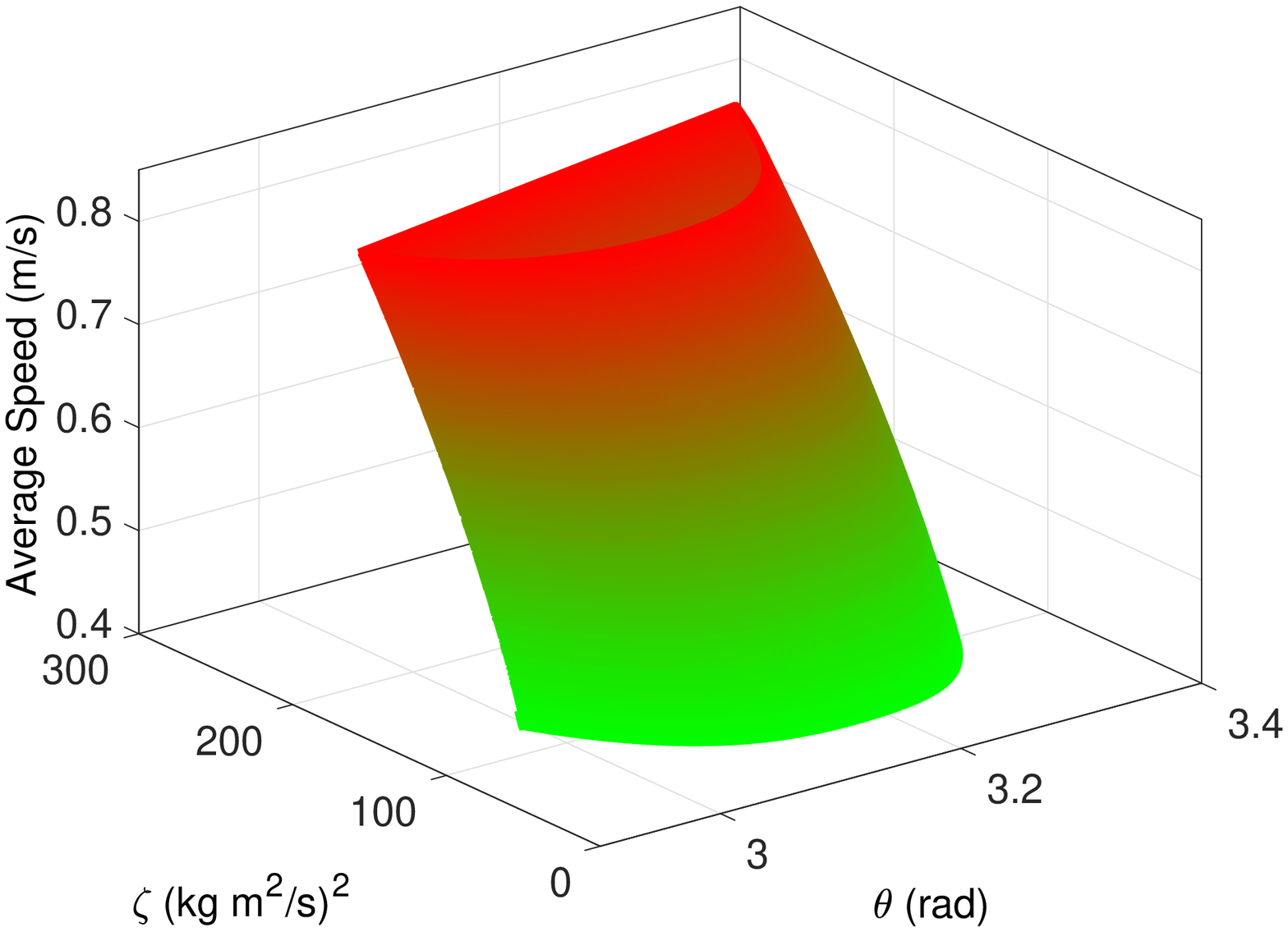}
\label{fig:continuum}
}
\centering
\hspace{10mm}
\subfigure[]
{
\includegraphics[width=0.41\textwidth]{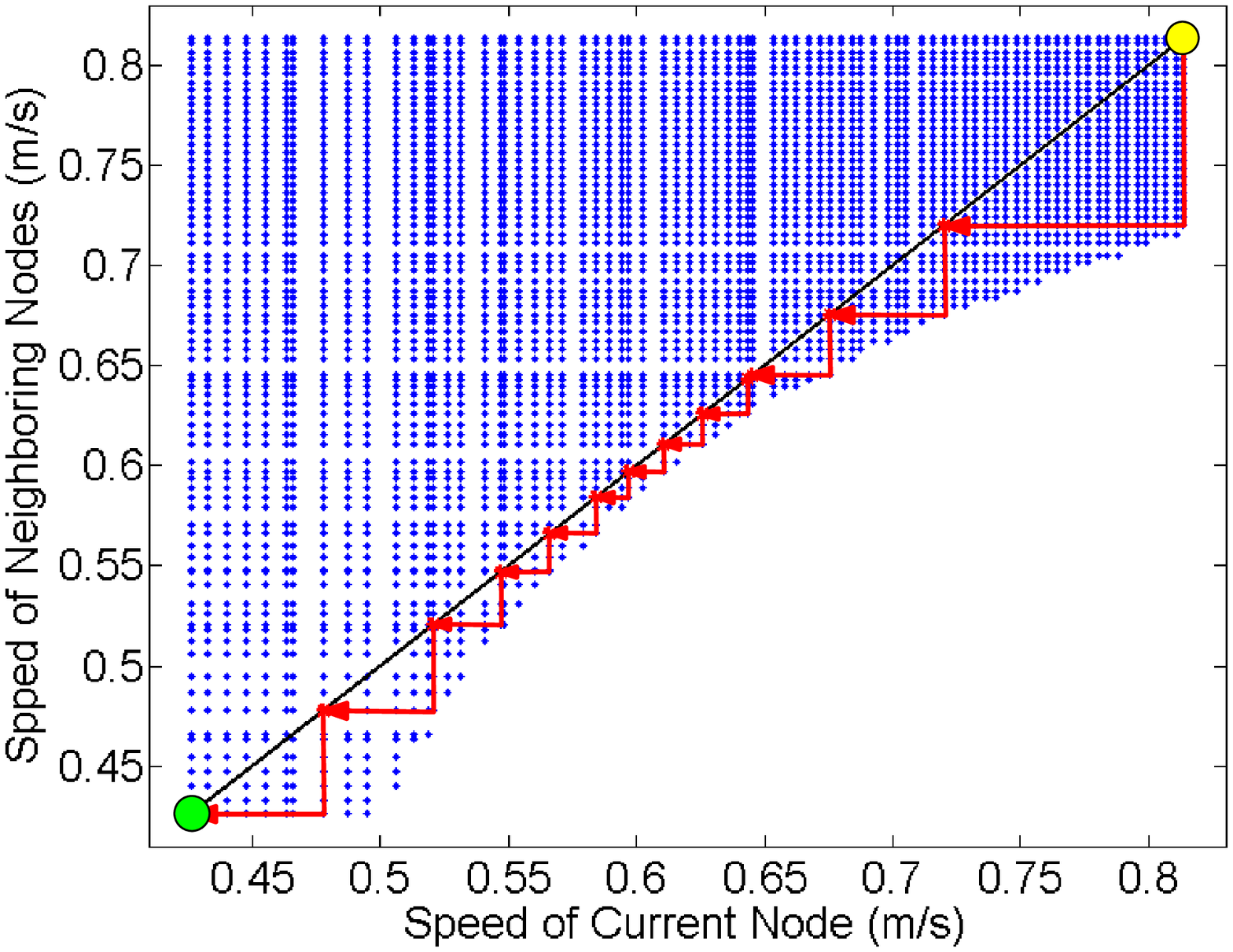}
\label{fig:digraph}
}
\vskip -10pt
\caption{\textbf{(a)} Continuum of limit cycles generated using the controller design presented in Section~\ref{sec:mod_con}. The average speed of the limit cycles increase from green to red. Slowest limit cycle is at 0.42 m/s while the fastest is at 0.81 m/s. \textbf{(b)} Directed graph of allowable transitions between limit cycles of different speeds. The path from 0.81 m/s (yellow ball) to 0.42 m/s (green ball) is depicted by red arrows. The tail of the arrow is on the source node and the head is on the destination node.}
\vskip -10pt
\end{figure*}

\begin{figure}[t]
\centering
\includegraphics[width=0.37\textwidth]{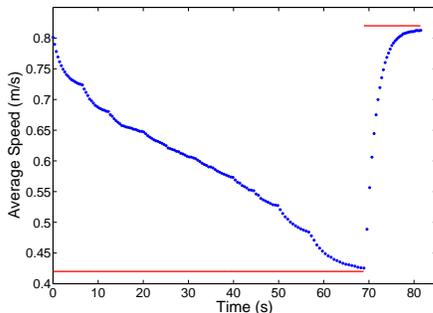}
\vskip -10pt
\caption{Simulation results for speed change of the biped from 0.81 m/s to 0.42 m/s and then back to 0.81 m/s. Average speed of the biped is blue while the desired speed is red.\label{fig:speed_change}
}
\vskip -10pt
\end{figure}

\section{Conclusion}
\label{sec:conclusion}

This paper presents a method to generate a continuum of exponentially stable limit cycles from a single HZD based limit cycle. We provide analytical guarantees for boundedness of the state under arbitrary switching among the limit cycles and ensure satisfaction of the modeling constraints. The latter is achieved by building a graph of feasible limit cycle switches and enforcing a dwell-time constraint on the switching signal. The method is applied for speed planning of a planar bipedal robot, allowing for speeds anywhere between 0.42 m/s to 0.81 m/s. The goal of this work is to enable provably stable speed planning by switching among limit-cycle gaits of underactuated bipedal walkers, so that their range of behaviors is extended.

\bibliographystyle{IEEEtran}
\bibliography{speed_change}

\begin{thebibliography}{10}
\providecommand{\url}[1]{#1}
\csname url@samestyle\endcsname
\providecommand{\newblock}{\relax}
\providecommand{\bibinfo}[2]{#2}
\providecommand{\BIBentrySTDinterwordspacing}{\spaceskip=0pt\relax}
\providecommand{\BIBentryALTinterwordstretchfactor}{4}
\providecommand{\BIBentryALTinterwordspacing}{\spaceskip=\fontdimen2\font plus
\BIBentryALTinterwordstretchfactor\fontdimen3\font minus
  \fontdimen4\font\relax}
\providecommand{\BIBforeignlanguage}[2]{{%
\expandafter\ifx\csname l@#1\endcsname\relax
\typeout{** WARNING: IEEEtran.bst: No hyphenation pattern has been}%
\typeout{** loaded for the language `#1'. Using the pattern for}%
\typeout{** the default language instead.}%
\else
\language=\csname l@#1\endcsname
\fi
#2}}
\providecommand{\BIBdecl}{\relax}
\BIBdecl

\bibitem{Harada2010}
K.~Harada, E.~Yoshida, and K.~Yokoi, Eds., \emph{Motion {P}lanning for
  {H}umanoid {R}obots}.\hskip 1em plus 0.5em minus 0.4em\relax Springer-Verlag,
  2010.

\bibitem{freidovich2009passive}
L.~B. Freidovich, U.~Mettin, A.~S. Shiriaev, and M.~W. Spong, ``A passive
  2-{DOF} walker: Hunting for gaits using virtual holonomic constraints,''
  \emph{IEEE {T}ransactions on {R}obotics}, vol.~25, no.~5, pp. 1202--1208,
  October 2009.

\bibitem{gregg2009reduction}
R.~D. Gregg and M.~W. Spong, ``Reduction-based control of three-dimensional
  bipedal walking robots,'' \emph{Int. J. of Robotics Research}, vol.~29,
  no.~6, p. 680–702, May 2009.

\bibitem{westervelt2003hybrid}
E.~R. Westervelt, J.~W. Grizzle, and D.~E. Koditschek, ``Hybrid zero dynamics
  of planar biped walkers,'' \emph{IEEE Transactions on Automatic Control},
  vol.~48, no.~1, pp. 42--56, 2003.

\bibitem{ames2014rapidly}
A.~Ames, K.~Galloway, J.~Grizzle, and K.~Sreenath, ``Rapidly exponentially
  stabilizing control lyapunov functions and hybrid zero dynamics,'' \emph{IEEE
  Transactions on Automatic Control}, vol.~59, no.~4, pp. 876--891, 2014.

\bibitem{nguyen2015safety}
Q.~Nguyen and K.~Sreenath, ``Safety-critical control for dynamical bipedal
  walking with precise footstep placement,'' \emph{IFAC-PapersOnLine}, vol.~48,
  no.~27, pp. 147--154, 2015.

\bibitem{hamed2016exponentially}
K.~A. Hamed, B.~G. Buss, and J.~W. Grizzle, ``Exponentially stabilizing
  continuous-time controllers for periodic orbits of hybrid systems:
  Application to bipedal locomotion with ground height variations,'' \emph{Int.
  J. of Robotics Research}, vol.~35, no.~8, pp. 977--999, July 2016.

\bibitem{tedrake2010}
R.~Tedrake, I.~R. Manchester, M.~Tobenkin, and J.~W. Roberts, ``Lqr-trees:
  Feedback motion planning via sums-of-squares verification,'' \emph{Int. J. of
  Robotics Research}, vol.~29, no.~8, pp. 1038--1052, 2010.

\bibitem{veer2015adaptation}
S.~Veer, M.~S. Motahar, and I.~Poulakakis, ``On the adaptation of dynamic
  walking to persistent external forcing using hybrid zero dynamics control,''
  in \emph{Proc. of IEEE/RSJ Int. Conf. on Intelligent Robots and Systems},
  Hamburg, September 2015, pp. 997--1003.

\bibitem{motahar2015impedance}
M.~S. Motahar, S.~Veer, J.~Huang, and I.~Poulakakis, ``Integrating dynamic
  walking and arm impedance control for cooperative transportation,'' in
  \emph{Proc. of IEEE/RSJ Int. Conf. on Intelligent Robots and Systems},
  Hamburg, September 2015, pp. 1004--1010.

\bibitem{razavi2016symmetry}
H.~Razavi, A.~M. Bloch, C.~Chevallereau, and J.~W. Grizzle, ``Symmetry in
  legged locomotion: a new method for designing stable periodic gaits,''
  \emph{Autonomous Robots}, pp. 1--24, July 2016.

\bibitem{hereid2016online}
A.~Hereid, S.~Kolathaya, and A.~D. Ames, ``Online optimal gait generation for
  bipedal walking robots using legendre pseudospectral optimization,'' in
  \emph{Proc. of IEEE Conf. on Decision and Control}, Las Vegas, December 2016,
  pp. 6173--6179.

\bibitem{da20162d}
X.~Da, O.~Harib, R.~Hartley, B.~Griffin, and J.~Grizzle, ``From 2{D} design of
  underactuated bipedal gaits to 3{D} implementation: Walking with speed
  tracking,'' \emph{IEEE Access}, vol.~4, pp. 3469 -- 3478, July 2016.

\bibitem{quan2016dynamic}
Q.~Nguyen, X.~Da, J.~Grizzle, and K.~Sreenath, ``Dynamic walking on stepping
  stones with gait library and control barrier functions,'' in \emph{Proc. of
  the Int. Workshop On the Algorithmic Foundations of Robotics}, San Francisco,
  December 2016.

\bibitem{saglam2013switching}
C.~O. Saglam and K.~Byl, ``Switching policies for metastable walking,'' in
  \emph{Proc. of IEEE Conf. on Decision and Control}, Florence, December 2013,
  pp. 977--983.

\bibitem{da2016first}
X.~Da, R.~Hartley, and J.~W. Grizzle, ``First steps toward supervised learning
  for underactuated bipedal robot locomotion, with outdoor experiments on the
  wave field,'' in \emph{Proc. of IEEE Int. Conf. on Robotics and Automation},
  Singapore, May 2017.

\bibitem{motahar2016composing}
M.~S. Motahar, S.~Veer, and I.~Poulakakis, ``Composing limit cycles for motion
  planning of 3d bipedal walkers,'' in \emph{Proc. of IEEE Conf. on Decision
  and Control}, Las Vegas, December 2016.

\bibitem{3D-robotica2012}
C.-L. Shih, J.~Grizzle, and C.~Chevallereau, ``From stable walking to steering
  of a 3d bipedal robot with passive point feet,'' \emph{Robotica}, vol.~30,
  no.~07, pp. 1119--1130, 2012.

\bibitem{hartman1964ordinary}
P.~Hartman, \emph{{O}rdinary {D}ifferential {E}quations}.\hskip 1em plus 0.5em
  minus 0.4em\relax John {W}iley \& {S}ons, 1964.

\bibitem{wendel2012rank}
E.~Wendel and A.~D. Ames, ``Rank deficiency and superstability of hybrid
  systems,'' \emph{Nonlinear Analysis: Hybrid Systems}, vol.~6, no.~2, pp.
  787--805, May 2012.

\bibitem{liberzon2003switching}
D.~Liberzon, \emph{Switching in {S}ystems and {C}ontrol}.\hskip 1em plus 0.5em
  minus 0.4em\relax Birkh\"{a}user, 2003.

\bibitem{veer2017supervisory}
S.~Veer, M.~S. Motahar, and I.~Poulakakis, ``Adaptation of limit-cycle walkers
  for collaborative tasks: A supervisory switching control approach,'' in
  \emph{Proc. of IEEE/RSJ Int. Conf. on Intelligent Robots and Systems
  (Submitted)}, 2017.

\end{thebibliography}

\end{document}